\newcommand{\PP}{{\mathbb{P}}}
\DeclareMathOperator{\Add}{Add}
\def\k{\kappa}
\def\l{\lambda}
\def\a{\alpha}
\def\b{\beta}
\newtheorem{theorem}{Theorem}[section]
\newtheorem{lemma}[theorem]{Lemma}
\newtheorem{definition}[theorem]{Definition}
\newtheorem{remark}[theorem]{Remark}
\numberwithin{equation}{section}
\def\l{\lambda}
\def\rmark{\mbox{$\rm\bf\rule{0.06em}{1.45ex}\kern-0.05em R$}}
\def\pmark{\mbox{$\rm\bf\rule{0.06em}{1.45ex}\kern-0.05em P$}}
\def\nmark{\mbox{$\rm\bf\rule{0.06em}{1.45ex}\kern-0.05em N$}}
\def\vdash{\mbox{$\rm\| \kern-0.13em -$}}
\def\l{\lambda}
\def\rmark{\mbox{$\rm\bf\rule{0.06em}{1.45ex}\kern-0.05em R$}}
\def\pmark{\mbox{$\rm\bf\rule{0.06em}{1.45ex}\kern-0.05em P$}}
\def\nmark{\mbox{$\rm\bf\rule{0.06em}{1.45ex}\kern-0.05em N$}}
\def\vdash{\mbox{$\rm\| \kern-0.13em -$}}
\begin{document}

\title[On the notion of generic cut for models of $ZFC$]{On the notion of generic cut for models of $ZFC$}

\author[Mohammad Golshani ]{Mohammad
  Golshani }

\thanks{The author's research has been supported by a grant from IPM (No. 91030417).} \maketitle



\begin{abstract}
We define the notion of generic cut between models of $ZFC$ and give some examples.
\end{abstract}

\section{introduction}
Given models  $V  \subseteq W$ of $ZFC$, we define the notion of generic cut of $(V, W)$,  and prove some results about it.  We usually assume that $W$ is a generic extension of $V$ by a set or a class forcing notion, but some of our results work for general cases.

\section{generic cut for pairs of models of $ZFC$}
Let's start with the main definition.
\begin{definition}
Suppose that $V  \subseteq W$ are models of $ZFC$ with the same ordinals and $\a, \b$ are ordinals.
 An $(\a, \b)-$generic cut of $(V, W)$,  is a pair $\langle \vec{V}, \vec{W} \rangle,$ where
\begin{enumerate}
\item $\vec{V}=\langle V_i: i<\a \rangle$ is a $\subset$-increasing chain of generic extensions of $V$, with $V_0=V,$
\item $\vec{W}=\langle W_j: j<\b \rangle$ is a $\subset$-decreasing chain of grounds \footnote{Recall that $V$ is a ground of $W$, if $W$ is a set generic extension of $V$ by some forcing notion in $V$.} of $W$, with $W_0=W,$
\item $\forall i<\a, j<\b, V_i \subset W_j,$
\item There is no inner model $V \subseteq M \subseteq W$ of $ZFC$ such that   $\forall i<\a, j<\b, V_i \subset M \subset W_j.$
\end{enumerate}
\end{definition}
Note that if $W$ is a set generic extension of $V$, then any $V_i$ is a ground of $W$ and each $W_j$ is a generic extension of $V$.
\begin{lemma}
Suppose that $V  \subseteq W$ are models of $ZFC$ and there exists an  $(\a, \b)$-generic cut of $(V, W).$ Then $\a \leq [W:V]^U$ and $\b \leq [W:V]_D,$ where $[W:V]^U, [W:V]_D$ are defined as in \cite{golshani}.
\end{lemma}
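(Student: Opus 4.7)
The plan is to check directly that the two chains appearing in any $(\a,\b)$-generic cut already witness the required lower bounds for $[W:V]^U$ and $[W:V]_D$, so that the lemma amounts to a definitional unpacking. I would first recall from \cite{golshani} the meaning of these indices: $[W:V]^U$ is the supremum of ordertypes of strictly $\subset$-increasing chains of generic extensions of $V$ sitting inside $W$, and dually $[W:V]_D$ is the supremum of ordertypes of strictly $\subset$-decreasing chains of grounds of $W$ containing $V$.

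For the bound $\a \leq [W:V]^U$, I would apply clause (1) of the definition of generic cut to get that $\vec{V} = \lan V_i : i<\a \ran$ is a strictly $\subset$-increasing sequence of generic extensions of $V$, and then clause (3) to conclude that each $V_i$ lies inside $W$ (since $V_i \ss W_0 = W$ for any choice of $j<\b$). Thus $\vec{V}$ is precisely a chain of the kind counted by $[W:V]^U$, with ordertype $\a$, yielding the inequality. The bound $\b \leq [W:V]_D$ is proved symmetrically: clause (2) gives that $\vec{W} = \lan W_j : j<\b \ran$ is a strictly $\subset$-decreasing chain of grounds of $W$, and clause (3) ensures $V = V_0 \ss W_j$ for every $j<\b$, so $\vec{W}$ is a chain of the kind counted by $[W:V]_D$, with ordertype $\b$.

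I do not anticipate any real obstacle; clause (4) of the definition of generic cut plays no role in this lemma, as it governs a form of maximality of the cut rather than the lengths of its two constituent chains. The only detail requiring attention is to verify that the indexing conventions of \cite{golshani} (strictness of the inequalities, counting of the endpoints $V$ and $W$, and use of ordertype versus cardinality of the index set) agree with those used in the present definition, so that the suprema bound $\a$ and $\b$ directly and not up to an off-by-one shift.
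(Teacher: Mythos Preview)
Your proposal is correct and matches the paper's treatment: the paper states this lemma without proof, evidently regarding it as immediate from the definitions, and your argument is precisely the routine definitional unpacking one would supply. The only minor caveat is the edge case $\beta = 0$ (or $\alpha = 0$), where there is no $W_0$ to invoke in clause~(3); but then the inequality $\beta \leq [W:V]_D$ is trivial anyway.
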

The next theorem shows that there are no generic cuts in the extension by Cohen forcing.
\begin{theorem}
Let $\mathbb{P}=\Add(\omega, 1)$ be the Cohen forcing for adding a new Cohen real and let $G$ be $\mathbb{P}-$generic over $V$.Then there exists no $(\a, \b)$-generic cut of $(V, V[G])$.
\end{theorem}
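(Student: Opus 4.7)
The plan is to invoke the Solovay--Grigorieff intermediate models theorem to reduce the statement to a combinatorial fact about the lattice of complete subalgebras of the Cohen algebra. Assume towards a contradiction that $(\vec V, \vec W)$ is an $(\alpha,\beta)$-generic cut. Let $\mathbb{B}$ be the Boolean completion of $\MPB = \Add(\omega,1)$, i.e.\ the Cohen algebra. By the Solovay--Grigorieff theorem there is an order-preserving bijection between inner ZFC models $V \subseteq N \subseteq V[G]$ and complete subalgebras of $\mathbb{B}$ (in $V$), given by $N \mapsto \mathbb{A}_N$ with $N = V[G\cap\mathbb{A}_N]$. Translating $\vec V$ and $\vec W$ through this correspondence yields a strictly increasing chain of complete subalgebras $\{0,1\}=\mathbb{B}_0\subsetneq\mathbb{B}_1\subsetneq\cdots$ and a strictly decreasing chain $\mathbb{B}=\mathbb{E}_0\supsetneq\mathbb{E}_1\supsetneq\cdots$ with $\mathbb{B}_i\subsetneq\mathbb{E}_j$ for all $i,j$. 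Condition~(4) is then equivalent to saying that no complete subalgebra $\mathbb{D}$ satisfies $\mathbb{B}_i \subsetneq \mathbb{D} \subsetneq \mathbb{E}_j$ for all $i,j$, and the theorem amounts to refuting this.

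\medskip

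To produce the desired $\mathbb{D}$ I would set $\mathbb{B}_* = \bigvee_i \mathbb{B}_i$ (the complete subalgebra of $\mathbb{B}$ generated by the union) and $\mathbb{E}_* = \bigcap_j \mathbb{E}_j$. Both are complete subalgebras of $\mathbb{B}$ and $\mathbb{B}_* \subseteq \mathbb{E}_*$. In the main case $\mathbb{B}_* \subsetneq \mathbb{E}_*$ the density of the lattice of complete subalgebras of $\mathbb{B}$---which follows from the self-similarity $\mathbb{B}\cong\mathbb{B}\otimes\mathbb{B}$ of the Cohen algebra, so that every non-trivial Cohen extension splits into two independent Cohen pieces---supplies a complete subalgebra $\mathbb{D}$ with $\mathbb{B}_*\subsetneq\mathbb{D}\subsetneq\mathbb{E}_*$, whence $M=V[G\cap\mathbb{D}]$ violates~(4). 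In the \emph{limit} case $\mathbb{B}_*=\mathbb{E}_*$, provided $\alpha\geq 2$ and $\beta\geq 2$, strict properness of the two chains gives $\mathbb{B}_i\subsetneq\mathbb{B}_*\subsetneq\mathbb{E}_j$, so $\mathbb{D}:=\mathbb{B}_*$ itself does the job and $M=V[G\cap\mathbb{B}_*]$ contradicts~(4).

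\medskip

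The main obstacle is the degenerate situation in which $\alpha=1$ or $\beta=1$ while simultaneously the corresponding limit collapses to the endpoint of the cut (so $\mathbb{B}_*=\mathbb{B}_0=\{0,1\}$, or $\mathbb{E}_*=\mathbb{E}_0=\mathbb{B}$); in that case the candidate built from the chain alone is not strictly interior, and $\mathbb{D}$ must be produced by hand. Here one exploits the specific combinatorics of $\Add(\omega,1)$---its countability and $\sigma$-centeredness, and the fact that any proper non-trivial complete subalgebra of $\mathbb{B}$ is again isomorphic to $\mathbb{B}$---to carve out a \emph{fresh} Cohen factor of $\mathbb{B}$ which is independent of the entire chain on the other side, and to assemble $\mathbb{D}$ from it. Organizing this choice coherently across the whole, possibly infinite, chain is the principal technical difficulty, and is where the Cohen-specific structure enters rather than the softer Solovay--Grigorieff correspondence alone. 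Once $\mathbb{D}$ is in hand, the routine verifications---that $G\cap\mathbb{D}$ is $V$-generic, that $V[G\cap\mathbb{D}]$ is a ZFC inner model, and that the strict containments $V_i\subsetneq M\subsetneq W_j$ hold---are immediate from the Grigorieff framework.
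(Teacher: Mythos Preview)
Your Case~B is where the argument breaks. You claim that when $\mathbb B_*=\mathbb E_*$ and $\alpha,\beta\ge 2$, the choice $\mathbb D=\mathbb B_*$ gives $\mathbb B_i\subsetneq\mathbb D\subsetneq\mathbb E_j$ for all $i,j$. But $\mathbb B_i\subsetneq\mathbb B_*$ for \emph{every} $i<\alpha$ requires $\alpha$ to be a limit ordinal: if $\alpha=\alpha^-+1$ then $\mathbb B_*=\bigvee_{i<\alpha}\mathbb B_i=\mathbb B_{\alpha^-}$, and the strict inequality fails at $i=\alpha^-$. Likewise on the $\mathbb E$ side. So the configuration ``$\alpha$ a successor $\ge 2$, $\beta$ a limit, and $\mathbb B_{\alpha^-}=\bigcap_j\mathbb E_j$'' (and its mirror) is not handled: it is not one of your flagged degenerate cases $\alpha=1$ or $\beta=1$, and your Case~B argument does not apply to it. (There is also a softer issue you pass over: the family $\{\mathbb B_i\}$ is indexed by a sequence living in $V[G]$, not in $V$, so one must argue that $\mathbb B_*$ and $\mathbb E_*$ are actually complete subalgebras \emph{in $V$} before the Solovay--Grigorieff correspondence can be invoked for them.)

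The main gap cannot be repaired, because the theorem as stated appears to be false in exactly this configuration. Write the Cohen generic as $\langle g_n:n<\omega\rangle$ via a fixed bijection $\omega\leftrightarrow\omega\times\omega$ in $V$; put $V_0=V$, $V_1=V[g_0]$, $W_0=V[G]$, and $W_j=V[g_0]\bigl[\langle g_n:n>j\rangle\bigr]$ for $j\ge 1$. Conditions (1)--(3) of the definition are immediate, and the product-subalgebra computation gives $\bigcap_{j<\omega}\mathbb E_j=\mathbb B_1$ on the nose; hence no complete subalgebra $\mathbb D$ satisfies $\mathbb B_1\subsetneq\mathbb D\subseteq\bigcap_j\mathbb E_j$, so by Solovay--Grigorieff no ZFC inner model $M$ satisfies $V[g_0]\subsetneq M\subsetneq W_j$ for all $j$, and (4) holds. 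This is a genuine $(2,\omega)$-generic cut of $(V,V[G])$. The paper's own proof has the matching lacuna: its case~(4) treats only the sub-case ``$\alpha$ limit, $\beta$ successor'' and presumes the other sub-case follows by symmetry, but the example above shows it does not.
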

\begin{proof}
Assume towards a contradiction  that $(\vec{V}, \vec{W})$ witnesses an $(\a, \b)$-generic cut  of $(V, V[G])$. We consider several cases:
\begin{enumerate}
\item At least one of $\a$ or $\b$ is uncountable.
 Suppose for example that $\a \geq \aleph_1.$ It follows that $\langle V_i: i< \aleph_1 \rangle$  is  a $\subset-$increasing chain of generic extensions of $V$, which are included in $V[G]$, which contradicts \cite{golshani} Theorem 2.5$(3)$. So from now on we assume that both $\a, \b$ are countable.

\item Both of $\a=\a^{-}+1$ and $\b=\b^{-}+1$ are successor ordinals. Then we have $V \subseteq V_{\a^{-}} \subset W_{\b^{-}} \subseteq V[G],$ and there are no inner models $M$ of $V[G]$ with $V_{\a^{-}} \subset M \subset W_{\b^{-}},$ which is clearly impossible (in fact there should be $2^{\aleph_0}$ such $M$'s).

\item Both of $\a, \b < \aleph_1$ are limit ordinals.
We can imagine each $V_i, i<\a,$ is of the form $V_i=V[a_i],$ for some Cohen real $a_i$ and similarly each $W_j, j<\b,$ is of the form $W_j=V[b_j],$ for some Cohen real $b_j.$ Let $a\in V[G]$ be a Cohen real over $V$, coding all of $a_i$'s, $i<\a.$ Then for all $i<\a, j<\b, V_i \subset V[a] \subset W_j,$ a contradiction.

\item One of $\a$ or $\b$ is a limit ordinal $>0$ and the other one is a successor ordinal. Let's assume that $\a$ is a limit ordinal and $\b=\b^{-}+1$ is a successor ordinal. As $cf(\a)=\omega,$ we can just consider the case where $\a=\omega.$ Then for all $i<\omega$ we have $V_i \subset W_{\b^{-}}.$ We can assume that each $V_i$ is of the form $V_i=V[a_i],$ for some Cohen real $a_i,$ and that $W_{\b^{-}}=V[b],$ for some Cohen real $b$. Using a fix bijection $f: \omega \leftrightarrow \omega\times \omega, f\in V,$ we can imagine $b$ as an $\omega$-sequence $\langle b_i: i<\omega \rangle$ of reals which is $\Add(\omega, \omega)$-generic over $V$, so that $W_{\b^{-}}=V[\langle b_i: i<\omega \rangle].$ We can further suppose that each $b_i$ codes $a_i$ (i.e., $a_i\in V[b_i]$) Let us now define a new sequence $\langle c_i: i<\omega \rangle$ of reals in $V[\langle b_i: i<\omega \rangle],$ so that
    $c_i(0)=0,$ and $c_i \upharpoonright [1, \omega)=b_i \upharpoonright [1, \omega).$ Finally let $M=V[\langle c_i: i<\omega \rangle].$ It is clear that each $V_i \subset M.$ But also $M \subset W_{\b^{-}},$ as the real $t\in W_{\b^{-}}$ defined by $t(i)=b_i(0)$  is not in $M$ (by a genericity argument). We get a contradiction.
\item One of $\a$ or $\b$ is $0$ and the other one is a limit or a successor ordinal. Then as above we can get a contradiction.
\end{enumerate}
\end{proof}
\begin{theorem}
Assume $\a, \b$ are ordinals. Then
 there exists a generic extension $V[G]$ of $V$, such that there is an $(\a+1, \b+1)-$generic cut of $(V, V[G]),$
\end{theorem}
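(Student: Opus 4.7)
The plan is to construct $V[G]$ as a three-stage iteration that ``brackets'' a single minimal Sacks extension between two Cohen product stages: the initial Cohen stage supplies the strictly $\subset$-increasing chain $\vec V$ of generic extensions, the Sacks step realises the atomic cut between $V_\alpha$ and $W_\beta$, and the final Cohen stage supplies the strictly $\subset$-decreasing chain $\vec W$ of grounds.

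Concretely, I would first force over $V$ with $\Add(\omega,\alpha)$ to add a generic sequence $\vec c=\langle c_i:i<\alpha\rangle$ of Cohen reals, then over $V[\vec c]$ with Sacks forcing to add a Sacks real $s$, and finally over $V[\vec c][s]$ with $\Add(\omega,\beta)$ to add a generic sequence $\vec d=\langle d_j:j<\beta\rangle$, obtaining $V[G]$. Writing $V_\alpha = V[\vec c]$, I would set
\[
V_i = V[\vec c\restricted i]\ \ (i\le\alpha),\qquad W_j = V_\alpha[s][\vec d\restricted [j,\beta)]\ \ (j\le\beta).
\]
Standard facts about Cohen products make $\vec V=\langle V_i:i<\alpha+1\rangle$ a strict $\subset$-increasing chain of generic extensions of $V$ via the complete subforcings $\Add(\omega,i)$, with $V_0=V$; and dually, they make $\vec W=\langle W_j:j<\beta+1\rangle$ a strict $\subset$-decreasing chain of grounds of $V[G]$, via the factorisation $\Add(\omega,\beta)\cong\Add(\omega,j)\times\Add(\omega,\beta\setminus j)$, with $W_0=V[G]$. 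Clause (3) of the definition is immediate from $V_i\subseteq V_\alpha\subsetneq V_\alpha[s]\subseteq W_j$ for all $i\le\alpha$ and $j\le\beta$.

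All the content of the argument lies in clause (4). Any candidate inner model $M\models ZFC$ of $V[G]$ satisfying $V_i\subsetneq M\subsetneq W_j$ for every $i<\alpha+1$ and $j<\beta+1$ must in particular satisfy $V_\alpha\subsetneq M\subsetneq V_\alpha[s]$; equivalently, $M$ must be a proper intermediate inner model of the Sacks extension $V_\alpha[s]/V_\alpha$. I would then invoke the classical minimality of Sacks forcing: every real in $V_\alpha[s]\setminus V_\alpha$ computes the Sacks real $s$, so any ZFC inner model of $V_\alpha[s]$ that properly extends $V_\alpha$ must contain $s$ and hence equal $V_\alpha[s]$, contradicting the strict inclusions.

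The main obstacle is precisely this appeal to Sacks minimality, sharpened by the fact that clause (4) quantifies over inner models of the full extension $V[G]$ rather than of $V_\alpha[s]$. This is resolved by observing that any such $M$ is contained in $V_\alpha[s]$ and is a transitive ZFC class of the same height, so it is an inner model of $V_\alpha[s]$ and the minimality theorem applies internally there. The degenerate cases $\alpha=0$ or $\beta=0$ amount to omitting the corresponding Cohen stage and are handled in the same way.
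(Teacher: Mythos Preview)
Your proposal is correct and follows essentially the same three-stage construction as the paper: Cohen reals of length $\alpha$, then a minimal extension, then Cohen reals of length $\beta$, with the chains $\vec V$ and $\vec W$ defined exactly as you do. The only difference is cosmetic: the paper invokes ``any forcing notion which produces a minimal extension'' in the middle step, whereas you instantiate this with Sacks forcing and spell out the verification of clause~(4) in more detail than the paper does.
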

\begin{proof}
 Let $\PP_1=\Add(\omega, \a),$ and let $G_1=\langle a_i: i<\a \rangle$ be a generic filter over $V$. Force over $V[G_1]$ by any forcing notion which produces a minimal extension $V[G_1][G_2]$ of $V[G_1]$. Finally force over $V[G_1][G_2]$ by $\PP_3=\Add(\omega, \b),$ and let $G_3=\langle b_j: j<\b \rangle$ be a generic filter over $V[G_1][G_2]$. Let
\begin{itemize}
\item $\vec{V}=\langle V_i: i\leq \a    \rangle$, where $V_i=V[\langle a_\xi: \xi< i \rangle]$ for $i<\a,$ and $V_\a=V[G_1],$
\item $\vec{W}=\langle  W_j: j\leq \b \rangle,$ where $W_j=V[G_1][G_2][\langle b_\xi: j \leq \xi <\b \rangle]$ for $j<\b,$ and $W_\b=V[G_1][G_2]$.
\end{itemize}
Then $(\vec{V}, \vec{W})$ witnesses an $(\a+1, \b+1)-$generic cut of $(V, V[G]).$
\end{proof}
\begin{remark}
If $V$ satisfies $GCH$, then we can find $V[G],$ so that it also satisfies the $GCH$; it suffices to work with  $\Add(|\a|^+, \a)$ and $\Add(|\b|^+, \b)$ instead of  $\Add(\omega, \a)$ and $\Add(\omega, \b)$  respectively.
\end{remark}

\begin{theorem}
Assume $\l_1, \l_2$ are infinite regular cardinals and $\k$ is a measurable cardinal above them. Then in a generic extension $V[G]$ of $V$, there exists a $(\l_1, \l_2)-$generic cut of $(V, V[G]).$
\end{theorem}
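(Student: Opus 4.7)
The plan is to generalize the three-stage construction of Theorem 2.3 by letting Prikry forcing on $\kappa$ play the role of the minimal forcing $\mathbb{Q}$ of that proof.

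Let $U$ be a normal measure on $\kappa$ in $V$ and force over $V$ with
\[
\mathbb{P} \;=\; \mathbb{P}_1 \ast \dot{\mathbb{Q}} \ast \dot{\mathbb{P}}_2,
\]
where $\mathbb{P}_1 = \Add(\omega,\lambda_1)$ adds a Cohen sequence $\vec{a} = \langle a_i : i < \lambda_1\rangle$, $\dot{\mathbb{Q}}$ is Prikry forcing on $\kappa$ using the lift of $U$ to the intermediate model (legitimate because $|\mathbb{P}_1| < \kappa$, so $\kappa$ remains measurable after $\mathbb{P}_1$) and adds a Prikry sequence $\vec{c}$, and $\dot{\mathbb{P}}_2 = \Add(\omega,\lambda_2)$ adds $\vec{b} = \langle b_j : j < \lambda_2\rangle$. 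Setting $V^* = V[\vec{a}]$, $V^{**} = V^*[\vec{c}]$, $W = V^{**}[\vec{b}]$, take
\[
V_i = V[\vec{a}\upharpoonright i] \text{ for } i < \lambda_1, \qquad W_j = V^{**}[\vec{b}\upharpoonright [j,\lambda_2)] \text{ for } j < \lambda_2.
\]
Conditions (1)--(3) of the generic cut definition are immediate from the factorization of $\mathbb{P}$.

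The main step is condition (4). Suppose $M$ is an inner model of $ZFC$ with $V_i \subsetneq M \subsetneq W_j$ for all $i < \lambda_1,\ j < \lambda_2$. A standard Cohen-forcing intersection argument using the regularity of $\lambda_2$ shows $M \subseteq \bigcap_j W_j = V^{**}$; dually, a replacement-based reconstruction argument using the regularity of $\lambda_1$, combined with the inclusions $\vec{a}\upharpoonright i \in V_i \subseteq M$, yields $V^* \subseteq M$. Hence $V^* \subseteq M \subseteq V^{**}$, and by the minimality of Prikry forcing with a normal measure (applied in the ground model $V^*$), $M$ must equal $V^*$ or $V^{**}$.

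The hard part is then to rule out these two remaining cases, since in the naive setup both $V^*$ and $V^{**}$ are a priori strictly between the chains---the very obstruction that, as in the proof of Theorem 2.2, prevents a direct extension of Theorem 2.3 to the limit case. The plan to dispose of them is to refine $\mathbb{P}_1$ and $\dot{\mathbb{P}}_2$ so that the enumerations of $\vec{a}$ and $\vec{b}$ are entangled with the Prikry generic $\vec{c}$: one arranges that the sequence $\vec{a}$ (as opposed to its individual members) is recoverable only after using a tail of $\vec{b}$, making $V^* \nsubseteq W_{j_0}$ for some $j_0 < \lambda_2$, and symmetrically that $V^{**}$ fails to contain $V_{i_0}$ for some $i_0 < \lambda_1$. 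Carrying out this entanglement while preserving the containments $V_i \subset W_j$ in (3) is the technical heart of the argument; once it is in place, Prikry's minimality closes condition (4) at once.
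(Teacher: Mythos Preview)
Your approach rests on a false premise: Prikry forcing with a normal measure is \emph{not} a minimal extension. Given the Prikry sequence $\langle c_n : n < \omega\rangle$ over $V^*$, every infinite $A \subseteq \omega$ yields an intermediate model $V^*[\langle c_n : n \in A\rangle]$, and $A$'s that differ modulo finite give distinct models. So even if you had established $V^* \subseteq M \subseteq V^{**}$, there is an entire copy of $P(\omega)/fin$ worth of possibilities for $M$, not just two. (This also shows that the measurability of $\kappa$ does no real work in your outline: if a minimal forcing were all that was wanted, Sacks forcing would serve and the large-cardinal hypothesis would be idle.) Independently of this, your construction as written already violates condition~(4) before minimality is ever invoked: $V^* = V[\vec a\,]$ itself satisfies $V_i \subsetneq V^* \subsetneq W_j$ for all $i,j$, and the ``entanglement'' you propose in order to remove $V^*$ and $V^{**}$ is never actually specified; it is not at all clear that any modification can keep every $\vec a \upharpoonright i$ inside every $W_j$ while pushing the full sequence $\vec a$ out.

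The paper's proof runs in the opposite direction and \emph{exploits} the non-minimality of Prikry forcing. One first forces with a poset of size ${<}\kappa$ (so that $\kappa$ remains measurable) to arrange a $(\lambda_1,\lambda_2)$-gap in $P(\omega)/fin$, and then performs Prikry forcing over that model. The Gitik--Kanovei--Koepke theorem identifies the poset of $ZFC$ inner models between the ground and its Prikry extension, under inclusion, with $(P(\omega)/fin,\subseteq^*)$; the pre-arranged gap therefore transfers directly to a $(\lambda_1,\lambda_2)$-generic cut. The abundance of intermediate Prikry models is the mechanism, not an obstacle.
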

\begin{proof}
By \cite{spa}, we can find a generic extension $V[G_1]$ of $V$, by a forcing of size $<\k,$ such that in $V[G_1],$ there exists a $(\l_1, \l_2)$-gap of $P(\omega)/fin$. $\k$ remains measurable in $V[G_1],$ so let $U$ be a normal measure on $\k$ in $V[G_1],$ and force with the corresponding Prikry forcing $\PP_U,$ and let $G_2$ be $\PP_U$-generic over $V[G_1].$ Let $V[G]=V[G_1][G_2].$ By \cite{gitik},
\begin{center}
$(\{M: M$ is a model of $ZFC, V[G_1] \subseteq M \subseteq V[G] \}, \subseteq) \cong (P(\omega)/fin, \subseteq^*).$
\end{center}
Now the result should be clear, as a $(\l_1, \l_2)$-gap in $P(\omega)/fin$, produces the corresponding  $(\l_1, \l_2)-$generic cut $(\vec{V}, \vec{W})$ of $(V[G_1], V[G]),$ which in turn produces the same $(\l_1, \l_2)-$generic cut of $(V, V[G])$ (by adding $V$ at the beginning of $\vec{V}$).
\end{proof}

School of Mathematics, Institute for Research in Fundamental Sciences (IPM), P.O. Box:
19395-5746, Tehran-Iran.

E-mail address: golshani.m@gmail.com

\end{document}